\shorttitle{On maxima of stationary fields}
\renewcommand{\E }{\mathbb{E}}
\renewcommand{\P}{\mathbb{P}}
\newcommand{\w}[1]{\mathbf{#1}}
\newcommand{\GN}{{\mathbb N}}
\newcommand{\GR}{{\mathbb R}}
\newcommand{\GZ}{{\mathbb Z}}
\newcommand{\bn}{{\mathbf n}}
\newcommand{\bN}{{\mathbf N}}
\newcommand{\bp}{{\mathbf p}}
\newcommand{\bk}{{\mathbf k}}
\newcommand{\bj}{{\mathbf j}}
\renewcommand{\bi}{{\mathbf i}}
\newcommand{\bs}{{\mathbf s}}
\theoremstyle{plain}
\newtheorem{theo}{Theorem}[section]
\newtheorem{corol}[theo]{Corollary}
\newtheorem{rema}[theo]{Remark}
\begin{document}

\title{On maxima of stationary fields}

\authorone[Nicolaus Copernicus University in Toru\'n, Poland]{N. Soja-Kukie\L a}

\addressone{Faculty of Mathematics and Computer Science, Nicolaus Copernicus University, ul.~Chopina 12/18, \mbox{87-100 Toru\'n}, Poland; e-mail address: \texttt{natas@mat.umk.pl}}

\begin{abstract}
Let $\{X_{\bn} : \bn\in\GZ^d\}$ be a weakly dependent stationary random field with maxima $M_{A} := \sup\{X_{\bi} : \bi\in A\}$ for finite $A\subset\GZ^d$ and \mbox{$M_{\bn} := \sup\{X_{\bi} : \w{1} \leq  \bi \leq  \bn \}$} for $\bn\in\GN^d$. 
In a~general setting we prove that \mbox{$\P(M_{(N_1(n),N_2(n),\ldots, N_d(n))} \leq  v_n)$} $= \exp(- n^d \P(X_{\w{0}} > v_n , M_{A_n} \leq  v_n)) +  o(1)$ for some increasing sequence of sets $A_n$ of size $ o(n^d)$, where $(N_1(n),N_2(n), \ldots,N_d(n))\to(\infty,\infty, \ldots, \infty)$ and $N_1(n)N_2(n)\cdots N_d(n)\sim n^d$. The sets $A_n$ are determined by a translation invariant total order $\preccurlyeq$ on $\GZ^d$.
For a class of fields satisfying a~local mixing condition, including $m$-dependent ones, the main theorem holds with a constant finite $A$ replacing $A_n$. The above results lead to new formulas for the extremal index for random fields. The new method of calculating limiting probabilities for maxima is compared with some known results and applied to the moving maximum field.
\end{abstract}

\keywords{stationary random fields; extremes; limit theorems; extremal index; $m$-dependence; moving maxima} 

\ams{60G70}{60G60}

\section{Introduction}

Let us consider a $d$-dimensional stationary random field $\{X_\bn : \bn\in\GZ^d\}$ with its partial maxima
\[ M_A := \sup \{ X_\bi  : \bi \in A\}\]
defined for finite $A\subset\GZ^d$. We also put
$M_{\bj,\bn} := \sup \{ X_\bi  : \bj \leq  \bi  \leq  \bn\}$
and $M_{\bn} := M_{\w{1},\bn}$ for $\bj,\bn\in\GZ^d$.
The goal is to study the asymptotic {behaviour} of $\P(M_{\bN(n)} \leq  v_n)$ as $n\to\infty$, for $\{v_n\}\subset \mathbb{R}$ and $\bN(n)\to\pmb{\infty}$ coordinatewise.

In the case $d=1$, when $\{X_n: n\in\mathbb{Z}\}$ is a stationary sequence, the well known result of O'Brien \cite[Theorem 2.1]{OBr87} states that under a broad class of circumstances
\begin{equation}\label{MAX_1}
\P(M_{n}\leq  v_n) = \exp(-n \P(X_0>v_n, M_{p(n)}\leq  v_n)) +  o(1)
\end{equation}
holds for some $p(n)\to\infty$ satisfying $p(n)= o(n)$. For \mbox{$m$-dependent} $\{X_n\}$ we can set $p(n):=m$ in formula (\ref{MAX_1}), as Newell \cite{Ne64} shows. 
It follows that the extremal index $\theta$ for $\{X_n\}$, defined by Leadbetter \cite{Lea83}, equals
\begin{equation}\label{THETA_1}
\theta = \lim_{n\to\infty} \P(M_{p(n)}\leq  v_n \,|\, X_0>v_n),
\end{equation}
where $p(n)=m$ in the $m$-dependent case. More generally, we can put $p(n)=m$ in (\ref{THETA_1}) whenever Condition $D^{(m+1)}(v_n)$, introduced by Chernick et al. \cite{CHM91}, is satisfied.

We recall that the extremal index $\theta\in[0,1]$ is interpreted as the reciprocal of the mean number of high threshold exceedances in a cluster. Formula (\ref{THETA_1}) for $\theta$ may be treated as an answer to the question: {\it Asymptotically, what is the probability that a~given element of a~cluster of large values is its last element on the right?}

Looking for formulas analogous to (\ref{MAX_1}) and (\ref{THETA_1}) for arbitrary $d\in\mathbb{N_+}$, one can try to answer the properly formulated $d$-dimensional version of the above question. This point is realized in Sections \ref{MAIN} and \ref{SEC_EI}. In Section \ref{MAIN} we prove the main result, Theorem~\ref{THEO_MAIN}. We establish that in a general setting the approximation
\begin{equation}\label{OUR_FORMULA}
\P(M_{\bN(n)} \leq  v_n) = \exp\left(-n^d \P\left(X_{\w{0}} > v_n, M_{A(\bp(n))} \leq  v_n\right)\right) +  o(1)
\end{equation}
holds with $A(\bp(n)) \subset \{\bj \in \GZ^d : -\bp(n) \leq \bj \leq \bp(n)\}$ defined by (\ref{DEF_A}), $\bN(n)$ fulfilling (\ref{N_PROP}) and $\bp(n)\to\pmb{\infty}$ satisfying $\bp(n)= o(\bN(n))$ and some other rate of growth conditions. For $d=1$ we have $A(p(n))=\{1,2,\ldots, p(n)\}$ and formula (\ref{OUR_FORMULA}) simplifies to~(\ref{MAX_1}). Corollary~\ref{COR_LOCAL} provides the local mixing condition (\ref{COND_DM}) equivalent to the fact that (\ref{OUR_FORMULA}) holds with $\bp(n):=(m,m,\ldots,m)$.  Section \ref{SEC_EI} is devoted to considerations concerning the notion of the extremal index for random fields. Formula (\ref{F1}), being a~generalization of (\ref{THETA_1}), and its simplified version (\ref{F2}) for fields fulfilling (\ref{COND_DM}) are proposed there. In Section \ref{MM_SEC} the results from Sections \ref{MAIN} and \ref{SEC_EI} are applied to describe the asymptotics of partial maxima for the moving maximum field.

In Section \ref{SEC_M} we focus on $m$-dependent fields and present a corollary of the main theorem generalizing the aforementioned Newell's \cite{Ne64} formula. We also compare the obtained result with the limit theorem for $m$-dependent fields proven by Jakubowski and Soja-Kukie\l a \cite[Theorem~2.1]{JS}.

The present paper provides a $d$-dimensional generalization of O'Brien's formula (\ref{MAX_1}) with a handy and immediate conclusion for \mbox{$m$-dependent} fields. Another general theorem by Turkman \cite[Theorem 1]{Tu06} is not well applicable in the $m$-dependent case.
A recent result obtained independently by Ling \cite[Lemma 3.2]{LING} is a special case of Theorem \ref{THEO_MAIN}.
Other theorems on the topic were given for some subclasses of weakly dependent fields: in the 2-dimensional Gaussian setting by French and Davis~\cite{FD}; for 2-dimensional moving maxima {and} moving averages by Basrak and Tafro \cite{B-T2014}; for \mbox{$m$-dependent} and max-$m$-approximable fields by Jakubowski and Soja-Kukie\l a \cite{JS}; for regularly varying fields by Wu and Samorodnitsky \cite{WS}.  The proof of Theorem~\ref{THEO_MAIN} presented in the paper, although achieved independently, is similar to proofs of \cite[Lemma 4]{FD} and \cite[Lemma 3.2]{LING}.

\section{Preliminaries}\label{PRE}

An~element $\bn\in\GZ^d$ is often denoted by $(n_1,n_2,\ldots,n_d)$ and $\|\bn\|$ is its sup norm. We write $\bi\leq  \bj$ and $\bn\to\pmb{\infty}$, whenever $i_l\leq  j_l$ and $n_l\to\infty$, respectively, for all $l\in\{1,2,\ldots, d\}$. We put $\mathbf{0}:=(0,0,\ldots,0)$, $\mathbf{1}:=(1,1,\ldots,1)$ and $\pmb{\infty}:=(\infty,\infty,\ldots,\infty)$.

In our considerations $\{X_{\bn} : \bn\in\GZ^d\}$ is a $d$-dimensional stationary random field. We ask for the asymptotics of $\P(M_{\bN(n)}\leq  v_n)$ as $n\to\infty$, for
$\bN = \{\bN(n) : n\in\GN\}\subset\GN^d$
such that
\begin{equation}\label{N_PROP}
\bN(n)\to\pmb{\infty}\;\;\text{and} \;\; N^*(n):=N_1(n)N_2(n)\cdots N_d(n)\sim n^d
\end{equation}
and $\{v_n : n\in\GN\} \subset \GR$. 

We are interested in {\em weakly dependent} fields. In the paper we assume that
\begin{eqnarray}\label{WEAK_DEP}
\P\left(M_{\bN(n)}\leq  v_n\right) = \P\left(M_{\bp(n)}\leq  v_n\right)^{k_n^d}+ o(1)
\end{eqnarray}
is satisfied for some $r_n\to\infty$ and all $k_n\to\infty$ such that $k_n= o(r_n)$,
with
\begin{equation}\label{DEF_p}
\bp(n):= \left(\lfloor N_1(n)/k_n\rfloor,\lfloor N_2(n)/k_n\rfloor,\ldots,\lfloor N_d(n)/k_n\rfloor \right).
\end{equation}
Applying the classical fact (see, e.g., O'Brien \cite{OBr87}):
\begin{equation}\label{FACT_OB}
(a_n)^n-\exp(-n(1-a_n))\to 0 \quad\text{as}\quad n\to\infty,\quad \text{for} \quad a_n\in[0,1],
\end{equation}
we get that (\ref{WEAK_DEP}) implies
\begin{eqnarray}\label{FOR}
\P\left(M_{\bN(n)}\leq  v_n\right) = \exp\left(-k_n^d \P\left(M_{\bp(n)}> v_n\right)\right)+ o(1).
\end{eqnarray}
Above, $p_l(n)= o(N_l(n))$ for $l\in\{1,2,\ldots,d\}$, which we briefly denote $\bp(n)= o(\bN(n))$.

\begin{rema}
For $d=1$ weak dependence in the sense of (\ref{WEAK_DEP}) is ensured by any of the following conditions: Leadbetter's $D(v_n)$, O'Brien's $AIM(v_n)$ or Jakubowski's $B_1(v_n)$; see~\cite{Lea83,OBr87,J91}.  For $d\in\GN_+$ the considered property follows for example from Condition $B_1^{\bN}(v_n)$ introduced by Jakubowski and Soja-Kukie\l a \cite{JS_PDF}. In particular, $m$-dependent fields are weakly dependent; see Section \ref{SEC_M}. A similar notion of weak dependence was investigated by Ling \cite[Lemma 3.1]{LING}.
\end{rema}

Let $\preccurlyeq$ be an arbitrary total order on $\GZ^d$ which is {\it translation invariant}, i.e. $\bi \preccurlyeq \bj$ implies $\bi + \bk \preccurlyeq \bj + \bk$ . An~example of such an order is the lexicographic order:
$$ \bi \preccurlyeq \bj \qquad \text{iff} \qquad (\bi = \bj \quad \text{or} \quad i_l < j_l \text{ for the first } l \text{ where } i_l \text{ and } j_l  \text{ differ}).$$
We will write $\bi \prec \bj$ whenever $\bi \preccurlyeq \bj$ and $\bi\neq\bj$.
For technical needs of further sections, we define the set $A(\bp)\subset \GZ^d$ for each $\bp\in\GN^d$ as follows:
\begin{equation}\label{DEF_A}
A(\bp) := \left\{\bj\in\GZ^d : -\bp \leq \bj \leq \bp \text{ and } \w{0} \prec \bj \right\}.
\end{equation}
\section{Main theorem}\label{MAIN}
In the following the main result of the paper is presented. The asymptotic behaviour of $\P(M_{\bN(n)}\leq  v_n)$ as $n\to\infty$, for weakly dependent $\{X_{\bn}\}$ and for $\{\bN(n)\}$ and $\{v_n\}$ as in Section~\ref{PRE}, is described.
\begin{theo}\label{THEO_MAIN}
Let $\{X_\bn\}$ satisfy (\ref{WEAK_DEP}) for some $r_n\to\infty$ and all $k_n\to\infty$ such that $k_n=o(r_n)$. If
\begin{equation}\label{ASS}
\liminf_{n\to\infty} \P\left(M_{\bN(n)} \leq  v_n\right) > 0,
\end{equation}
then for every $\{k_n\}$ as above
\begin{equation}\label{CONV_MAIN}
\P\left(M_{\bN(n)} \leq  v_n\right) = \exp\left(- n^d \P\left(X_{\w{0}} > v_n, M_{A(\bp(n))}\leq  v_n\right)\right) +  o(1)
\end{equation}
holds with $\bp(n)$ and $A(\bp(n))$ given by (\ref{DEF_p}) and (\ref{DEF_A}), respectively.
\end{theo}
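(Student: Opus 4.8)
The plan is to take (\ref{FOR}) as the starting point and reduce the theorem to a statement about a single rate. Since (\ref{FOR}) gives $\P(M_{\bN(n)}\leq v_n)=\exp(-k_n^d\,\P(M_{\bp(n)}>v_n))+o(1)$ while (\ref{ASS}) forces $\limsup_n k_n^d\,\P(M_{\bp(n)}>v_n)<\infty$, and $t\mapsto e^{-t}$ is Lipschitz on bounded sets, it suffices to prove
\[
k_n^d\,\P\bigl(M_{\bp(n)}>v_n\bigr)-n^d a_n\longrightarrow 0,\qquad a_n:=\P\bigl(X_{\w{0}}>v_n,\,M_{A(\bp(n))}\leq v_n\bigr).
\]
Writing $B_n:=\{\bi\in\GZ^d:\w{1}\leq\bi\leq\bp(n)\}$, I record that $k_n^d|B_n|\sim n^d$, because $k_n\lfloor N_l(n)/k_n\rfloor=N_l(n)(1-o(1))$. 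It is convenient to argue along a subsequence on which $\P(M_{\bN(n)}\leq v_n)$ converges and to let $L$ denote the corresponding value of $k_n^d\P(M_{\bp(n)}>v_n)$; the goal then becomes $n^d a_n\to L$.

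The basic tool is the decomposition of $\{M_{\bp(n)}>v_n\}$ according to the $\preccurlyeq$-largest exceedance inside $B_n$. For $\bi\in B_n$ put $E_{\bi}:=\{X_{\bi}>v_n\}\cap\{X_{\bj}\leq v_n \text{ for all } \bj\in B_n \text{ with } \bj\succ\bi\}$; these events are disjoint with union $\{M_{\bp(n)}>v_n\}$, and by stationarity together with the translation invariance of $\preccurlyeq$ one gets $\P(E_{\bi})=\P(X_{\w{0}}>v_n,\,M_{C_{\bi}}\leq v_n)$, where $C_{\bi}:=\{\bj\in B_n-\bi:\bj\succ\w{0}\}$. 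Because $B_n-\bi\subseteq\{-\bp(n)\leq\bj\leq\bp(n)\}$ we always have $C_{\bi}\subseteq A(\bp(n))$, so each $\P(E_{\bi})\geq a_n$, and summing gives $\P(M_{\bp(n)}>v_n)\geq|B_n|\,a_n$. Multiplying by $k_n^d$ yields the first, easy half: $\limsup_n n^d a_n\leq L$, equivalently $\P(M_{\bN(n)}\leq v_n)\leq\exp(-n^d a_n)+o(1)$.

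The reverse inequality $L\leq\liminf_n n^d a_n$ is the heart of the matter and the step I expect to be the main obstacle, since the windows $C_{\bi}$ are typically proper subsets of $A(\bp(n))$ and a term-by-term comparison runs the wrong way. The device I would use is a second, coarser block scale $\tilde k_n$ chosen with $\tilde k_n\to\infty$, $\tilde k_n=o(r_n)$ and $\tilde k_n=o(k_n)$, so that the associated block $\tilde\bp(n):=(\lfloor N_1(n)/\tilde k_n\rfloor,\ldots,\lfloor N_d(n)/\tilde k_n\rfloor)$ satisfies $\bp(n)=o(\tilde\bp(n))$. Crucially, (\ref{WEAK_DEP}) is assumed for \emph{every} admissible scale, so it holds at $\tilde k_n$ as well and, through (\ref{FOR}), forces $\tilde k_n^d\,\P(M_{\tilde\bp(n)}>v_n)\to L$ with the \emph{same} $L$. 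Running the last-exceedance decomposition over $\tilde B_n:=\{\bi:\w{1}\leq\bi\leq\tilde\bp(n)\}$, for the interior indices $\bi$ with $\tilde B_n-\bi\supseteq\{-\bp(n)\leq\bj\leq\bp(n)\}$ the window $\tilde C_{\bi}$ now \emph{contains} $A(\bp(n))$, hence carries at least as many constraints, giving $\P(X_{\w{0}}>v_n,M_{\tilde C_{\bi}}\leq v_n)\leq a_n$; the interior contribution is therefore at most $|\tilde B_n|\,a_n\sim n^d a_n/\tilde k_n^d$.

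It remains to show that the boundary indices — those within $\bp(n)$ of the faces of $\tilde B_n$ — contribute negligibly after multiplication by $\tilde k_n^d$. Their total mass is at most $\P(M_{\partial\tilde B_n}>v_n)$, which by subadditivity of $A\mapsto\P(M_A>v_n)$ and a covering of the slab $\partial\tilde B_n$ by $O(|\partial\tilde B_n|/|B_n|)$ translates of $B_n$ is dominated by $O(|\partial\tilde B_n|/|B_n|)\cdot\P(M_{\bp(n)}>v_n)$; since $\P(M_{\bp(n)}>v_n)\sim L/k_n^d$ and $\tilde k_n^d|\tilde B_n|\sim k_n^d|B_n|\sim n^d$, multiplying by $\tilde k_n^d$ leaves a bound of order $|\partial\tilde B_n|/|\tilde B_n|=O(\|\bp(n)\|/\|\tilde\bp(n)\|)=o(1)$. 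Combining, $L=\lim\tilde k_n^d\,\P(M_{\tilde\bp(n)}>v_n)\leq\liminf_n n^d a_n$, which with the easy half gives $n^d a_n\to L$ and hence (\ref{CONV_MAIN}). The one genuinely delicate point is this boundary/covering estimate together with the verification that the coarser scale may be inserted while keeping $L$ unchanged; everything else is bookkeeping with the order $\preccurlyeq$ and stationarity.
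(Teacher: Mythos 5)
Your proposal is correct and follows essentially the same route as the paper's own proof: the $\preccurlyeq$-last-exceedance decomposition with window monotonicity for the easy inequality, and a second coarser block scale (your $\tilde k_n$, the paper's $t_n=o(k_n)$) with an interior/boundary split and a subadditivity--covering estimate showing the boundary slab is negligible for the reverse one. The only differences are cosmetic: you phrase the conclusion as convergence of $k_n^d\,\P(M_{\bp(n)}>v_n)-n^d a_n$ to $0$ along subsequences and handle the boundary as an additive error, whereas the paper works with the two exponential inequalities directly and controls the boundary through the ratio $\P(M_{B}>v_n)/\P(M_{\bs(n)}>v_n,\,M_{B}\leq v_n)\to 0$.
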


\begin{rema}
If (\ref{WEAK_DEP}) holds for some $k_n\to\infty$, then (\ref{ASS}) is implied by the condition
\begin{equation}\label{ASS_STRONGER}
\limsup_{n\to\infty} n^d\P(X_{\w{0}} > v_n) < \infty.
\end{equation}
This follows from (\ref{FOR}) and the inequality 
$$k_n^d\P\left(M_{\bp(n)}> v_n\right) \leq N^*(n) \P(X_{\mathbf{0}} >v_n) \sim n^d \P(X_{\mathbf{0}} >v_n).$$
\end{rema}

The proof of the theorem generalizes the reasoning proposed by O'Brien \cite[Theorem 2.1]{OBr87} for sequences. A~way of dividing the event $\{M_{\bp(n)} > v_n\}$ into $p^*(n):=p_1(n)p_2(n)\cdots p_d(n)$ mutually exclusive events determined by $\preccurlyeq$ (which are similar in some sense) plays a key role in the proof. An analogous technique was used by French and Davis \cite[Lemma~4]{FD} in the 2-dimensional Gaussian case. Recently, Ling \cite[Lemma~3.1]{LING} expanded their result to some non-Gaussian fields. In both papers the authors restrict themselves to the lexicographic order on $\GZ^2$.

\begin{proof}[Proof of Theorem \ref{THEO_MAIN}]
Let the assumptions of the theorem be satisfied. Then (\ref{FOR}) holds.
Dividing the set $\{M_{\bp(n)} > v_n\}$ into $p^*(n)=p_1(n)p_2(n)\cdots p_d(n)$ disjoint sets and applying monotonicity and stationarity we obtain that
\begin{eqnarray*}
\lefteqn{\P(M_{\bp(n)} > v_n)}\\
& = & \sum_{\mathbf{1}\leq \bj \leq \bp(n)} \P\left(X_\bj > v_n, X_{\bi} \leq  v_n \text{ for all } \bi \succ \bj \text{ such that } \w{1} \leq \bi \leq  \bp(n) \right)\\
& \geq  & \sum_{\mathbf{1}\leq \bj \leq \bp(n)} \P\left(X_\bj > v_n, X_{\bi} \leq  v_n \text{ for all } \bi \in A(\bp(n))+\bj\right) \\
& = & p^*(n) \P\left(X_{\w{0}} > v_n, M_{A(\bp(n))} \leq  v_n\right),
\end{eqnarray*}
which combined with (\ref{FOR}) and the fact that $k_n^dp^*(n) \sim n^d$ gives
\begin{equation}\label{IN_1}
\P\left(M_{\bN(n)}\leq  v_n\right)  \leq   \exp\left(-n^d \P\left(X_{\w{0}} > v_n, M_{A(\bp(n))} \leq  v_n\right)\right) +  o(1).
\end{equation}

In the second step of the proof we shall show that the inequality reverse to (\ref{IN_1}) also holds. It is sufficient to consider the case $\P(M_{\bN(n)} \leq  v_n) \to \gamma$ for $\gamma \in [0,1]$, and we do so. Since $\gamma=0$ is excluded by assumption (\ref{ASS}) and for $\gamma =1$ the proven inequality is obvious, we focus on $\gamma\in(0,1)$. Let us choose $\{t_n\}\subset\GN_+$ so that $t_n\to\infty$ and $t_n=o(k_n)$. Put 
{$\bs(n):= (\lfloor N_1(n)/t_n\rfloor,\lfloor N_2(n)/t_n\rfloor , \ldots, \lfloor N_d(n)/t_n\rfloor )$} 
and $s^*(n):=s_1(n)s_2(n)\cdots s_d(n)$. Since $t_n= o(r_n)$, (\ref{FOR}) holds with $k_n$ replaced by $t_n$ and $\bp(n)$ replaced by $\bs(n)$. Also $\bp(n) =  o(\bs(n))$ and $\bs(n)= o(\bN(n))$. Moreover, for the sets
$$C(\bp(n), \bs(n)):= \left\{\bj \in \GZ^d : \bp(n)+\w{1} \leq  \bj \leq  \bs(n)-\bp(n)\right\}$$
and
$$B(\bp(n), \bs(n)):= \left\{\bj\in\GZ^d : \w{1} \leq  \bj \leq  \bs(n)\right\} \backslash \, C(\bp(n), \bs(n))$$
we obtain that
\begin{eqnarray*}
\frac{\P\left(M_{\bs(n)} > v_n, M_{B(\bp(n), \bs(n))} \leq  v_n\right)}{\P\left(M_{B(\bp(n), \bs(n))} > v_n\right)}
& =  &\frac{\P\left(M_{\bs(n)} > v_n\right) - \P\left(M_{B(\bp(n), \bs(n))} > v_n\right)}{\P\left(M_{B(\bp(n), \bs(n))} > v_n\right)} \\
& =  &\frac{\P\left(M_{\bs(n)} > v_n\right)}{\P\left(M_{B(\bp(n), \bs(n))} > v_n\right)} - 1\\
& = & \frac{\P\left(M_{\bs(n)} > v_n\right)}{ o(s^*(n)/p^*(n)) \cdot \P\left(M_{\bp(n)} > v_n\right)} - 1\\
& = & \frac{1+o(1)}{o(1)} \cdot \frac{t_n^d \P\left(M_{\bs(n)} > v_n\right)}{k_n^d \P\left(M_{\bp(n)} > v_n\right)} - 1.
\end{eqnarray*}
Applying (\ref{FOR}) twice, we get
$$\frac{t_n^d \P\left(M_{\bs(n)} > v_n\right)}{k_n^d \P\left(M_{\bp(n)} > v_n\right)} \to \frac{-\log \gamma}{- \log \gamma} = 1, \quad \text{as} \quad n\to\infty,$$
and consequently
\begin{equation}\label{NEG}
\frac{\P\left(M_{B(\bp(n), \bs(n))} > v_n\right)}{\P\left(M_{\bs(n)} > v_n, M_{B(\bp(n), \bs(n))} \leq  v_n\right)} \to 0, \quad \text{as} \quad n\to\infty.
\end{equation}
Now, observe that
\begin{eqnarray*}
\P\left(M_{\bs(n)} > v_n\right) & = & \P\left(M_{\bs(n)} > v_n, M_{B(\bp(n), \bs(n))} \leq  v_n\right) + \P\left(M_{B(\bp(n), \bs(n))} > v_n\right) \\
& = & \P\left(M_{\bs(n)} > v_n, M_{B(\bp(n), \bs(n))} \leq  v_n\right) (1+ o(1))\\
& \leq  &  \sum_{\bj \in C(\bp(n), \bs(n))} \P\left(X_{\w{j}} > v_n, M_{A(\bp(n)) + \bj} \leq  v_n\right) \cdot (1+ o(1))\\
& \leq & s^*(n) \P\left(X_{\w{0}} > v_n, M_{A(\bp(n))} \leq  v_n\right) (1+ o(1)),
\end{eqnarray*}
by property (\ref{NEG}), subadditivity and monotonicity of probability, and by stationarity of the field $\{X_{\bn}\}$. Applying (\ref{FOR}) with $(k_n, \bp(n))$ replaced by $(t_n, \bs(n))$ and the fact that $t_n^d s^*(n) \sim n^d$, we conclude that
\begin{eqnarray}\label{IN_2}
\P\left(M_{\bN(n)}\leq  v_n\right) & \geq  & \exp\left(-t_n^d s^*(n) \P\left(X_{\w{0}} > v_n, M_{A(\bp(n))} \leq  v_n\right)(1+ o(1))\right) + o(1)\nonumber \\
&=& \exp\left(-n^d \P(X_{\w{0}} > v_n, M_{A(\bp(n))} \leq  v_n)\right) +  o(1).
\end{eqnarray}
Since inequalities (\ref{IN_1}) and (\ref{IN_2}) are both satisfied, the proof is complete.
\end{proof}

Theorem \ref{THEO_MAIN} immediately yields the following generalization of the result established by Chernick et al. \cite[Proposition~1.1]{CHM91} for $d=1$. Assumption (\ref{COND_DM}) given below is a~multidimensional counterpart of the local mixing {\em Condition} $D^{(m+1)}(v_n)$ defined in \cite{CHM91} for sequences and it is satisfied by, e.g., $m$-dependent fields (see Section \ref{M_FIRST}).

\begin{corol}\label{COR_LOCAL}
Let the assumptions of Theorem \ref{THEO_MAIN} be satisfied. Then 
\begin{equation*}
\P\left(M_{\bN(n)} \leq  v_n\right) = \exp\left(- n^d \P\left(X_{\w{0}} > v_n, M_{A((m,m,\ldots,m))}\leq  v_n \right)\right) +  o(1)
\end{equation*}
if and only if
\begin{equation}\label{COND_DM}
n^d \P\left(X_{\w{0}} > v_{n} \geq  M_{A((m,m,\ldots,m))}, M_{A(\bp(n)) \backslash A((m,m,\ldots,m))} > v_n \right) \xrightarrow[n\to\infty]{} 0,
\end{equation}
where $k_n\to\infty$ is such that k$_n= o(r_n)$.
\end{corol}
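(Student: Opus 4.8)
The plan is to deduce the statement directly from Theorem~\ref{THEO_MAIN} by comparing the two exponents. Write $A_p:=A(\bp(n))$ and $A_m:=A((m,m,\ldots,m))$. Since $\bp(n)\to\pmb{\infty}$, for all large $n$ we have $\bp(n)\geq(m,m,\ldots,m)$, hence $A_m\subseteq A_p$ and $\{M_{A_p}\leq v_n\}\subseteq\{M_{A_m}\leq v_n\}$. First I would decompose the larger event as the disjoint union
$$\{M_{A_m}\leq v_n\}=\{M_{A_p}\leq v_n\}\,\cup\,\{M_{A_m}\leq v_n,\,M_{A_p\setminus A_m}>v_n\},$$
using that an exceedance of $v_n$ somewhere in $A_p$, while $M_{A_m}\leq v_n$, must occur in $A_p\setminus A_m$. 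Intersecting with $\{X_{\w{0}}>v_n\}$, taking probabilities and multiplying by $n^d$ yields
$$n^d\P(X_{\w{0}}>v_n,M_{A_m}\leq v_n)=n^d\P(X_{\w{0}}>v_n,M_{A_p}\leq v_n)+\delta_n,$$
where $\delta_n:=n^d\P(X_{\w{0}}>v_n\geq M_{A_m},M_{A_p\setminus A_m}>v_n)\geq0$ is exactly the quantity in (\ref{COND_DM}).

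Next, abbreviate $\gamma_n:=n^d\P(X_{\w{0}}>v_n,M_{A_p}\leq v_n)$, so that Theorem~\ref{THEO_MAIN} reads $\P(M_{\bN(n)}\leq v_n)=\exp(-\gamma_n)+o(1)$, while the identity above becomes $n^d\P(X_{\w{0}}>v_n,M_{A_m}\leq v_n)=\gamma_n+\delta_n$. Subtracting the conclusion of Theorem~\ref{THEO_MAIN}, the claimed formula with $A_m$ in place of $A_p$ is seen to be equivalent to
$$\exp(-\gamma_n)-\exp(-(\gamma_n+\delta_n))=\exp(-\gamma_n)\bigl(1-e^{-\delta_n}\bigr)\to0.$$
It therefore remains to show that this convergence is equivalent to $\delta_n\to0$, which is the content of (\ref{COND_DM}).

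The crucial ingredient is that $\gamma_n$ stays bounded. Indeed, combining $\P(M_{\bN(n)}\leq v_n)=\exp(-\gamma_n)+o(1)$ with assumption (\ref{ASS}) gives $\liminf_{n\to\infty}\exp(-\gamma_n)>0$, hence $\limsup_{n\to\infty}\gamma_n<\infty$; consequently $\exp(-\gamma_n)$ is bounded away from $0$ and bounded above by $1$ for all large $n$. With this, the factorisation $\exp(-\gamma_n)(1-e^{-\delta_n})$ settles both implications: if $\delta_n\to0$ then $1-e^{-\delta_n}\to0$ and the product vanishes since $\exp(-\gamma_n)\leq1$; conversely, if the product vanishes, then dividing by the factor $\exp(-\gamma_n)$, which is bounded below by a positive constant, forces $1-e^{-\delta_n}\to0$, i.e. $\delta_n\to0$.

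The only genuine subtlety, and the step I would treat most carefully, is the ``only if'' direction: it really does use that $\exp(-\gamma_n)$ is bounded below, which is precisely where assumption (\ref{ASS}) enters, via Theorem~\ref{THEO_MAIN}. Without that lower bound one could not cancel the factor and recover $\delta_n\to0$. Everything else---the inclusion $A_m\subseteq A_p$, the disjoint decomposition, and (if one prefers a quantitative estimate) the elementary inequalities $0\leq1-e^{-\delta_n}\leq\delta_n$---is routine.
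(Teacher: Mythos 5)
Your argument is correct and is precisely the fleshing-out of what the paper leaves implicit (the corollary is stated as following ``immediately'' from Theorem~\ref{THEO_MAIN}, with no written proof): the disjoint decomposition giving $n^d\P(X_{\w{0}}>v_n,M_{A((m,\ldots,m))}\leq v_n)=\gamma_n+\delta_n$, and the observation that assumption (\ref{ASS}) keeps $\gamma_n$ bounded so that $\exp(-\gamma_n)(1-e^{-\delta_n})\to0$ iff $\delta_n\to0$, is exactly the intended route. You correctly identify the one non-trivial point, namely that the ``only if'' direction needs $\exp(-\gamma_n)$ bounded away from zero, which is where (\ref{ASS}) enters.
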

We point out that Corollary \ref{COR_LOCAL} reforms a faulty formula for $m$-dependent fields proposed by Ferreira and Pereira \cite[Proposition 2.1]{F-P08}; see \cite[Example 5.5]{JS}. We also suggest to compare the above condition (\ref{COND_DM}) with assumption $D''(v_n, \mathcal{B}_n,\mathcal{V})$ proposed by Pereira et al. \cite[Definition 3.1]{PMF}.

\begin{rema}\label{PPA}
There exists a close relationship between Theorem \ref{THEO_MAIN} and compound Poisson approximations in the spirit of Arratia et al. \cite[Section 4.2.1]{AGG90}. The random variable 
$${\Lambda^{(1)}_n}:=\sum_{\mathbf{1}\leq\bk \leq \bN(n)} \mathbbm{1}_{\{X_{\w{k}} > v_n, M_{\bk + A(\bp(n))}\leq  v_n \}},$$
with the expectation ${\lambda^{(1)}_n}:=N^*(n) \P\left(X_{\w{0}} > v_n, M_{A(\bp(n))}\leq  v_n \right)$,
estimates the number of clusters of exceedances over $v_n$ in the set $\{\bk : \mathbf{1} \leq \bk \leq \bN(n)\}$ and we have \mbox{$\P(M_{\bN(n)} \leq v_n) = \exp(-{\lambda^{(1)}_n}) + o(1)$}. 
\end{rema}

\begin{rema}
It is worth noting that translation invariant linear orders on the set of indices~$\GZ^d$ play a significant role in considerations (by Basrak and Planini\'c \cite{BP}, Wu and Samorodnitsky \cite{WS}) on the extremes of regularly varying fields.
\end{rema}

\section{Extremal index}\label{SEC_EI}

In this part we apply the results given in Section \ref{MAIN} to establish formulas (\ref{F1}) and (\ref{F2}) for the extremal index $\theta$ for random fields. We refer to Choi \cite{CHOI} or Jakubowski and Soja-Kukie\l a \cite{JS} for definitions and some considerations on the extremal index in the multidimensional setting.

Here we present a method of calculating the number $\theta\in[0,1]$ satisfying
\begin{equation}\label{THETA_DEF}
\P(M_{\bN(n)} \leq  v_n) - \P(X_{\w{0}} \leq  v_n) ^{\theta n^d} \to 0 \quad \text{as}\quad n\to\infty,
\end{equation}
whenever such $\theta$ exists.
Let us observe that according to (\ref{FACT_OB}) we have
$$\P(X_{\w{0}} \leq  v_n) ^{n^d} = \exp\left(-n^d \P(X_{\w{0}} > v_n)\right) +  o(1)$$
and, moreover, Theorem \ref{THEO_MAIN} yields
$$\P(M_{\bN(n)} \leq  v_n) = \exp\left(- n^d \P\left(X_{\w{0}} > v_n, M_{A(\bp(n))}\leq  v_n\right)\right) + o(1)$$
for $\bN(n)$, $v_n$ and $\bp(n)$ satisfying appropriate assumptions.
Hence, provided that 
$$ 0 < \liminf_{n\to\infty} n^d\P(X_{\w{0}} > v_n) \leq  \limsup_{n\to\infty }n^d\P(X_{\w{0}} > v_n) < \infty,$$
condition (\ref{THETA_DEF}) is satisfied
if and only if
\begin{equation}\label{F1}
\theta = \lim_{n\to\infty}\P\left(M_{A(\bp(n))}\leq  v_n \, \big| \, X_{\w{0}} > v_n\right).
\end{equation}
Formula (\ref{F1}), allowing computation of extremal indices $\theta$ for random fields, is a~multidimensional generalization of (\ref{THETA_1}). In the special case when assumption (\ref{COND_DM}) is satisfied, it is easy to show that formula (\ref{F1}) simplifies to the following one:
\begin{equation}\label{F2}
\theta = \lim_{n\to\infty} \P\left(M_{A((m,m,\ldots,m))}\leq  v_n \, \big| \, X_{\w{0}} > v_n\right).
\end{equation} 

The above formulas are in line with the interpretation of $\theta$ as the reciprocal of the mean number of high threshold exceedances in a cluster.
Indeed, they answer the question: {\it What is the asymptotic probability that a given element of a cluster is the~distinguished element of the cluster?}, where the distinguished element in a cluster is the greatest one with respect to the order $\preccurlyeq$. Such identification of a~unique representative for each cluster is called {\em declustering}, {\em declumping} or {\em anchoring} and has much in common with compound Poisson approximations (see, e.g., \cite{AGG90, BC,BP}).

\begin{rema}
Formula (\ref{F1}) justifies the following definition of the runs estimator $\hat{\theta}^R_{\bN(n)}$ for the extremal index $\theta$:
$$\hat{\theta}^R_{\bN(n)} := S_n^{-1}\sum_{\w{1}+\bp (n) \, \leq \, \bk \, \leq \, \bN(n)-\bp(n)} { \mathbbm{1}_{\{X_{\w{k}} > v_n, M_{\bk + A(\bp(n))}\leq  v_n\}} },$$
where $S_n$ is the number of exceendances over $v_n$ in the set $\{\bk\in\GZ^d : \w{1} \leq \bk \leq \bN(n)\}$.
\end{rema}

\section{Maxima of $m$-dependent fields}\label{SEC_M}

In this section we focus on $m$-dependent fields. We recall that $\{X_\bn\}$ is {\em $m$-dependent} for some $m\in\GN $, if the families $\{X_{\bi}:\bi\in U \}$ and $\{X_{\bj} : \bj\in V\}$ are independent for all pairs of finite sets  $ U , V \subset\GZ^d$ satisfying $\min \{\|\bi-\bj\| : \bi\in  U ,\,\bj\in V \} > m$.

Let us assume that $\{X_{\bn}\}$ is $m$-dependent and satisfies (\ref{ASS}) for some sequence \mbox{$\{v_n\}\subset\GR $}. Then it is easy to show that condition (\ref{ASS_STRONGER}) holds too (see \cite[Remark 4.2]{JS}). Below, we present two methods that can be applied to calculate the limit of $\P(M_{\bN(n)} \leq  v_n)$. A direct connection between them can be given and we illustrate it in the case $d=2$.

\subsection{First method}\label{M_FIRST}
The first of the methods is a consequence of the main result presented in the paper. Since
the field $\{X_{\bn}\}$ is $m$-dependent, it satisfies (\ref{WEAK_DEP}) for each $k_n\to\infty$ such that $k_n= o(r_n)$, for some $r_n\to\infty$
 (see, e.g., \cite{JS}). Moreover, the inequality
\begin{eqnarray*}
\lefteqn{n^d \P\left(X_{\w{0}} > v_{n} \geq  M_{A((m,m,\ldots,m))}, M_{A(\bp(n)) \backslash A((m,m,\ldots,m))} > v_n \right)}\\
& \leq &   n^d \sum_{\bi \in A(\bp(n)), \|\bi\| > m} \P(X_{\w{0}} > v_{n}, X_{\bi} > v_{n})\\
& = & n^d \sum_{\bi \in A(\bp(n)), \|\bi\| > m} \P(X_{\w{0}} > v_{n})\P(X_{\bi} > v_{n})
 \leq  n^d \cdot \frac{n^d}{k_n^d} \cdot \P(X_{\w{0}} > v_{n})^2 (1+ o(1))
\end{eqnarray*}
holds with the right-hand side tending to zero by (\ref{ASS_STRONGER}). From Corollary \ref{COR_LOCAL} we obtain that
\begin{equation}\label{FIRST}
\P\left(M_{\bN(n)} \leq  v_n\right) = \exp\left( - n^d \P(X_{\w{0}} > v_n, M_{A((m,m,\ldots,m))}\leq  v_n\right) +  o(1).
\end{equation}

\subsection{Second method}\label{M_SECOND}
The second formula comes from Jakubowski and Soja-Kukie\l{}a \cite[Theorem 2.1]{JS}. It states that we have
\begin{eqnarray}\label{SECOND}
\P \! \left(M_{\bN (n)} \! \leq \!  v_n\right)=\exp\left( -n^d \!\!\!\! \sum_{\pmb{\varepsilon}\in\{0,1\}^d} \!\! (-1)^{\varepsilon_1+\varepsilon_2+\ldots+\varepsilon_d}\P \! \left( M_{\pmb{\varepsilon},(m,m,\ldots,m)} \! > \! v_n\right) \right) \! + \! o(1)
\end{eqnarray}
under the above assumptions on $\{X_{\bn}\}$.
This result is a consequence of the~Bonferroni-type inequality from Jakubowski and Rosi\'nski \cite[Theorem 2.1]{J-R99}. 

\subsection{Comparison}\label{SEC_M.COMP}
For $d=1$ both of the formulas simplify to the well-known result of Newell \cite{Ne64}:
$$\P(M_n \leq  v_n) = \exp(-n\P(X_0> v_n, M_{1,m} \leq  v_n)) +  o(1).$$
Each of them allows us to describe the asymptotic behaviour of maxima on the base of tail properties of joint distribution of a fixed finite dimension. To apply the first method, one uses the distribution of the $(1 + ((2m+1)^d-1)/2)$-element family \mbox{$\{X_{\bn}: \bn\in \{\w{0}\} \cup A((m,m,\ldots,m))\}$}. To involve the second method, one bases on the distribution of the $(m+1)^d$-element family $\{X_{\bn}: \w{0} \leq  \bn \leq  (m,m,\ldots,m)\}$.

Below a link between the two formulas is described 
{in two ways: a more conceptual one, and one that is shorter but perhaps less intuitive}.
To avoid annoying technicalities, we focus on $d=2$.

\subsubsection{{First approach: counting clusters.}}
{Our aim is to calculate the number of clusters of exceedances over $v_n$ in the window $W:=\{\bk\in\GZ^2 : \mathbf{1} \leq \bk \leq \bN(n)\}$ in two different ways and obtain, as a consequence, the equivalence of (\ref{FIRST}) and (\ref{SECOND}).}

{Let the random set $J_n$ be given as $J_n:=\{\bk\in W : X_{\bk}>v_n\}$
and let $\leftrightarrow$ be the equivalence relation on $J_n$ defined as follows:
\begin{equation*}
\bi \leftrightarrow \bj \quad \textrm{whenever} \quad \exists_{l\in\mathbb{N}}\;\exists_{\bk_0,\bk_1, \ldots, \bk_l,\bk_{l+1}\in J_n, \bk_0=\bi, \bk_{l+1}=\bj} \;\; \max_{0\leq h\leq l}\|\bk_{h+1}-\bk_h\| \leq m,
\end{equation*}
for $\bi,\bj\in J_n$. We define a {\em cluster} as an equivalence class of $\leftrightarrow$ and obtain the partition $\mathcal{C}_n:=J_n/ _{\leftrightarrow}$ of $J_n$ into $\Lambda_n:=\# \mathcal{C}_n$ clusters. We put $\lambda_n := \E (\Lambda_n)$, $\mathcal{C}'_n:=\{C\in\mathcal{C}_n : \max_{\bi, \bj \in C}\|\bi-\bj\| \leq m\}$, $\mathcal{C}''_n:=\mathcal{C}_n \backslash \mathcal{C}'_n$, $\Lambda'_n:=\#\mathcal{C}'_n$ and $\lambda'_n:=\E (\Lambda'_n)$.}

{Let $\Lambda^{(1)}_n$ and $\lambda^{(1)}_n$, associated with the method presented in Section \ref{M_FIRST}, be defined as in Remark \ref{PPA} with $\bp (n):=(m,m)$. Recall that we have $A(m,m)=\{\bj\in\GZ^2 : (-m,-m) \leq \bj \leq (m,m) \textrm{ and } (0,0)\prec \bj\}$. Analogously (see \cite[Remark~2.2]{JS}) we define $\Lambda^{(2)}_n$ and $\lambda^{(2)}_n$ related with the method from Section \ref{M_SECOND} as follows:
\begin{eqnarray*}
\Lambda^{(2)}_n &:=& \sum_{\bk \in W}\sum_{\pmb{\varepsilon}\in\{0,1\}^2} (-1)^{\varepsilon_1 + \varepsilon_2} \mathbbm{1}_{\{M_{\bk+\pmb{\varepsilon}, \bk + (m,m)} > v_n\}},\\
\lambda^{(2)}_n &:=& \E (\Lambda^{(2)}_n) = N^*(n) \sum_{\pmb{\varepsilon}\in\{0,1\}^2} (-1)^{\varepsilon_1 + \varepsilon_2} \P(M_{\pmb{\varepsilon}, (m,m)} > v_n).
\end{eqnarray*}
}

{Assume that $C\in\mathcal{C}'_n$. Let  $B(C,m):=\{\bj\in\mathbb{Z}^2 : \|\bj - \bk\| \leq m \textrm{ for some } \bk \in C\}$ and suppose that $M_{B(C,m) \backslash C} \leq v_n$ holds (which obviously is satisfied in the typical case $B(C,m)\subset W$). Observe that for such $C$ we have
\begin{equation}\label{1_1}
\sum_{\bk \in C} \mathbbm{1}_{\{M_{A(m,m) + \bk}\leq  v_n \}} = \sum_{\bk \in C} \mathbbm{1}_{\{ \bk \textrm{ is the largest element of }C \textrm{ with respect to } \preccurlyeq\}} = 1
\end{equation}
and
\begin{equation}\label{1_2}
\sum_{\bk \in \bar{C}} \sum_{\pmb{\varepsilon}\in\{0,1\}^2} (-1)^{\varepsilon_1 + \varepsilon_2} \mathbbm{1}_{\{M_{\bk+\pmb{\varepsilon},\bk+(m,m)} > v_n \}}
= \mathbbm{1}_{\{M_{\bk(C), \bk(C)+(m,m)} > v_n\}} = 1,
\end{equation}
where $\bar{C}:=\{\bk \in \GZ^2 : \bk + \bi \in C \textrm{ for some } \mathbf{0} \leq \bi \leq (m,m)\}$ and $\bk(C)$ satisfies the~condition $C\subset \{ \bk \in \GZ^2 : \bk(C) \leq \bk \leq \bk(C) + (m,m)\}$.}

{We will show that each of the following equalities holds:
\begin{eqnarray}
\delta_n &:=& \E|\Lambda_n - \Lambda'_n| = \E(\Lambda_n - \Lambda'_n) = o(1); \label{EPS} \\
\delta^{(1)}_n &:=& \E|\Lambda^{(1)}_n - \Lambda'_n| = o(1); \label{EPS_1}\\
\delta^{(2)}_n &:=& \E|\Lambda^{(2)}_n - \Lambda'_n| = o(1). \label{EPS_2}
\end{eqnarray}
This will entail the condition $\lambda_n=\lambda'_n + o(1) = \lambda^{(1)}_n +o(1) = \lambda^{(2)}_n +o(1)$
and complete this section.}

{To show (\ref{EPS}), observe that the event $\{\# \mathcal{C}''_n = l\}$, for $l\in\mathbb{N}_+$, implies that there exist pairs $\bj(i,a), \bj(i,b) \in J_n$, for $i\in\{1,2,\ldots,l\}$, such that $m < \|\bj(i,a) - \bj(i,b)\|\leq 2m$ holds for each $i$ and $\|\bj(i_1,c_1)-\bj(i_2,c_2)\| > m$ is satisfied for $i_1\neq i_2$ and $c_1, c_2\in\{a,b\}$.  Thus we have 
$$ \delta_n  =  \sum_{l=1}^{\infty} l\P(\# \mathcal{C}''_n = l)
 \leq  \sum_{l=1}^{\infty} l \left(N^*(n)\left((4m+1)^2-(2m+1)^2\right) \P(X_\mathbf{0} > v_n)^2\right)^l.$$
Since $q_n:=N^*(n)((4m+1)^2-(2m+1)^2) \P(X_\mathbf{0} > v_n)^2=o(1)$ by (\ref{ASS_STRONGER}), we obtain that
$\delta_n \leq \sum_{l=1}^{\infty} l(q_n)^{l} = q_n (1-q_n)^{-2}$
for all large $n$'s and finally $\delta_n=o(1)$.}

{Before we establish (\ref{EPS_1}) and (\ref{EPS_2}), we will give an upper bound for the probability that a fixed $\bk\in W$ belongs to a large cluster. Note that we have
\begin{eqnarray}\label{OSZ}
\lefteqn{\P\left(\bk \in C \textrm{ for some } C\in\mathcal{C}_n''\right)}\nonumber\\
&=& \P\left(\bk \in C \textrm{ and } \|\bj-\bk\|\leq m \textrm{ for all }\bj\in C, \textrm{ for some } C\in\mathcal{C}_n'' \right) \nonumber\\
&& + \; \P\left(\bk \in C \textrm{ and } \|\bj-\bk\|> m \textrm{ for some }\bj\in C, \textrm{ for some } C\in\mathcal{C}_n''\right) \\
&\leq & \P\left(\|\bi\!-\!\bk\| \leq m \textrm{ and } \|\bj \!-\! \bk\|\leq m \textrm{ and } \|\bi \!-\! \bj\|>m, \textrm{ for some }\bi,\bj\in J_n\right) \nonumber\\
&& + \; \P\left(\bk \in J_n \textrm{ and } m<\|\bk-\bj\|\leq 2m \textrm{ for some }\bj \in J_n\right) \nonumber\\
&\leq & \left((2m+1)^4 + ((4m+1)^2 - (2m+1)^2)\right)\P(X_{\mathbf{0}} > v_n)^2 = a(m)\P(X_{\mathbf{0}} > v_n)^2 \nonumber
\end{eqnarray}
with $a(m):=(2m+1)^4 + 4m(3m+1)$.}

{Applying observation (\ref{1_1}), property (\ref{OSZ}) and taking into account estimation errors for clusters situated near the edges of the window $W$, we obtain
\begin{eqnarray*}
\delta_n^{(1)} & = & \E \left|\sum_{\bk \in  W} \left(\mathbbm{1}_{\{\bk \in \bigcup \mathcal{C}'_n, M_{\bk + A(m,m)}\leq  v_n \}}
+ \mathbbm{1}_{\{\bk \in \bigcup \mathcal{C}''_n , M_{\bk + A(m,m)}\leq  v_n\}}\right) - \Lambda'_n \right|\\
& \leq & \E\left|\sum_{C\in\mathcal{C}'_n} \sum_{\bk\in C} \mathbbm{1}_{\{M_{\bk + A(m,m)}\leq  v_n\}} - \Lambda'_n\right|
+  \E \left(\sum_{\bk \in W}  \mathbbm{1}_{\{\bk \in \bigcup \mathcal{C}''_n \}}\right)\\
& \leq &  2m(N_1(n)+N_2(n)) \P(X_{\mathbf{0}} > v_n)+ a(m)N^*(n)\P(X_{\mathbf{0}} > v_n)^2,
\end{eqnarray*}
which combined with assumption (\ref{ASS_STRONGER}) implies (\ref{EPS_1}).
Quite similarly, using (\ref{1_2}) instead of (\ref{1_1}) and writing $\bar{W}:=\{\bk\in \GZ^2 : \mathbf{1}-(m,m) \leq \bk \leq \bN(n)\}$, we conclude that
\begin{eqnarray*}
\delta_n^{(2)} 
& \leq & \E \left| \sum_{\bk\in \bar{W} } \mathbbm{1}_{\{\bk\in\bar{C} \textrm{ for some }C\in\mathcal{C}'_n \}} \sum_{\pmb{\varepsilon}\in\{0,1\}^2} (-1)^{\varepsilon_1 + \varepsilon_2} \mathbbm{1}_{\{M_{\bk+\pmb{\varepsilon}, \bk + (m,m)} > v_n\}} - \Lambda'_n\right| \\
&& \hspace{-0.5cm} + \;\, \E \left(\sum_{\bk\in\bar{W}\backslash W} \sum_{\pmb{\varepsilon}\in\{0,1\}^2} \left| (-1)^{\varepsilon_1 \! + \varepsilon_2} \mathbbm{1}_{\{M_{\bk+\pmb{\varepsilon}, \bk + (m,m)} > v_n\}}\right|\right) \\
&& \hspace{-0.5cm} + \;\, 2\E \left(\sum_{\bk\in W} \mathbbm{1}_{\{\bk\in\bar{C} \textrm{ for some }C\in\mathcal{C}''_n \}} \right)\\
& \leq & \E\left| \sum_{C\in \mathcal{C}'_n} \sum_{\bk\in \bar{C}} \sum_{\pmb{\varepsilon}\in\{0,1\}^2} (-1)^{\varepsilon_1 + \varepsilon_2} \mathbbm{1}_{\{M_{\bk+\pmb{\varepsilon}, \bk + (m,m)} > v_n\}} - \Lambda'_n  \right|\\
&& \hspace{-0.5cm} +  \sum_{\bk\in\bar{W}\backslash W} (2m+1)\P(X_{\mathbf{0}} > v_n) + 2N^*(n)\P\left(\bk\in\bar{C} \textrm{ for some }C\in\mathcal{C}''_n\right)\\
& \leq & 2m(2m+1)(N_1(n)+N_2(n))\P(X_\mathbf{0}>v_n)\\
&& \hspace{-0.5cm} + \;\,  m(2m \! + \!\! 1)(N_1(n) \! + \! N_2(n) \! + \! m)\P(X_\mathbf{0} \! > \! v_n) \! + \! 2(m \! + \!\! 1)^2a(m)N^*(n)\P(X_\mathbf{0} \! > \! v_n)^2.
\end{eqnarray*}
Since the right-hand side tends to zero by (\ref{ASS_STRONGER}), property (\ref{EPS_2}) follows.}

\subsubsection{Second approach: direct verification.} In this part we assume that $\preccurlyeq$ is the lexicographic order on $\GZ^2$. Let us notice that
\begin{eqnarray*}
\lefteqn{\P\!\left(M_{{(0,0)},(m,m)} \!\! > \! v_n\right) \! - \! \P\!\left(M_{(1,0),(m,m)} \!\! > \! v_n\right) \! - \! \P\!\left(M_{(0,1),(m,m)} \!\! > \! v_n\right) \! + \! \P\!\left(M_{(1,1),(m,m)} \!\! > \! v_n\right)}\\
& = & \P\left(X_{(0,0)} > v_n, M_{R((m,m))} \leq  v_n\right)\\
&& \hspace{3cm} - \;\; \P\left(M_{(0,1), (0,m){}} > v_n, M_{(1,0),(m,0)} >v_n, M_{(1,1), (m,m)} \leq  v_n\right)
\end{eqnarray*}
is true with $R((p_1,p_2)):=A((p_1,p_2))\cap \GN^2$, where on the left-hand side of the equation the sum of probabilities from (\ref{SECOND}) for $d=2$ appears.
Next, let us look at the exponent in (\ref{FIRST}) and observe that
\begin{eqnarray*}\
\lefteqn{\P\left(X_{(0,0)} > v_n,M_{A((m,m))} \leq  v_n\right)}\\
& = & \P\left(X_{(0,0)} > v_n, M_{R((m,m))} \leq  v_n\right)\\
&& \hspace{3cm} - \;\;  \P\left(X_{(0,0)} > v_n, M_{R((m,m))} \leq  v_n, M_{(1,-m),(m,-1)} > v_n\right)
\end{eqnarray*}
holds and, moreover, the second summand of the right-hand side satisfies
\begin{eqnarray*}
\lefteqn{\P\left(X_{(0,0)} > v_n, M_{R((m,m))} \leq  v_n, M_{(1,-m),(m,-1)} > v_n\right)} \\
& = & \!\! \sum_{l=1}^{m} \P\left(X_{(0,0)} > v_n, M_{R((m,m))} \leq  v_n, M_{(1,-l),(m,-l)} > v_n, M_{(1,-l+1), (m,-1)} \leq  v_n\right) \\
& = & \!\! \sum_{l=1}^{m} \P\left(X_{(0,0)} > v_n, M_{R((m,m-l))} \leq  v_n, M_{(1,-l),(m,-l)} > v_n, M_{(1,-l+1), (m,-1)} \leq  v_n\right) \\
&& \hspace{10.6cm} + \;\; o\left(n^{-2}\right)\\
& = & \!\! \sum_{l=1}^{m} \P\left(X_{(0,l)} > v_n, M_{(0,l) + R((m,m-l))} \leq  v_n, M_{(1,0),(m,0)} > v_n, M_{(1,1), (m,l-1)} \leq  v_n\right) \\
&& \hspace{10.6cm} + \;\; o\left(n^{-2}\right) \\
& = &  \!\! \P\left(M_{(0,1), (0,m)} > v_n, M_{(1,0),(m,0)} >v_n, M_{(1,1), (m,m)} \leq  v_n \right) +o\left(n^{-2}\right).
\end{eqnarray*}
In the above statement probabilities of mutually exclusive events are summed up and \mbox{$m$-dependence}, assumption (\ref{ASS_STRONGER}) and stationarity are applied.
Finally, we obtain
\begin{eqnarray*}
\lefteqn{\P\!\left(M_{{(0,0)},(m,m)} \!\! > \! v_n\right) \! - \! \P\!\left(M_{(1,0),(m,m)} \!\! > \! v_n\right) \! - \! \P\!\left(M_{(0,1),(m,m)} \!\! > \! v_n\right) \! + \! \P\!\left(M_{(1,1),(m,m)} \!\! > \! v_n\right)}\\
&& \hspace{5.3cm} = \;\; \P\left(X_{(0,0)} > v_n,M_{A((m,m))} \leq  v_n\right) + o\left(n^{-2}\right).
\end{eqnarray*}
Summarizing, we have confirmed that both presented methods lead to the same result.

\begin{rema}
The above reasoning for $m$-dependent fields can also be applied in the general setting. 
Suppose that formula (\ref{OUR_FORMULA}), with $\preccurlyeq$ the lexicographic order on $\GZ^2$, describes the asymptotics of partial maxima of the stationary field $\{X_\bn : \bn\in\GZ^2\}$.
Then
\begin{equation}\label{SECOND_GEN}
\P \left(M_{\bN (n)}  \leq   v_n\right)=\exp\left( -n^2  \sum_{\pmb{\varepsilon}\in\{0,1\}^2}  (-1)^{\varepsilon_1+\varepsilon_2}\P \left( M_{\pmb{\varepsilon},\bp(n)}  >  v_n\right) \right)  +  o(1)
\end{equation}
holds if and only if $\{X_\bn\}$ satisfies the following condition:
\begin{equation}\label{COND_PROP}
\sum_{l=1}^{p_2(n)} \P\left(X_{\mathbf{0}} > v_n, M_{U_l(\bp (n))} >  v_n, M_{V_l(\bp (n))} > v_n, M_{W_l(\bp (n))} \leq  v_n\right) = o(n^{-2}),
\end{equation}
where $U_l(\bp):=\{0,\ldots, p_1\} \times \{p_2-l+1, \ldots, p_2\}$, $V_l(\bp):=\{1,\ldots, p_1\} \times \{-l\}$ and $W_l(\bp):= A(\bp) \cap (\GZ\times \{-l+1,\ldots, p_2-l\})$.
Formula (\ref{SECOND_GEN}) generalizes (\ref{SECOND}). In the present section we have used the fact that $m$-dependent fields satisfy (\ref{COND_PROP}) with $\bp(n):=(m,m)$.
\end{rema}

\section{Example: moving maxima}\label{MM_SEC}
Below, we use the results from Sections \ref{MAIN} and \ref{SEC_EI} to describe the asymptotics of partial maxima for the moving maximum field. We note that approaches to the problem using different methods can be found in Basrak and Tafro \cite{B-T2014} or Jakubowski and Soja-Kukie\l a~\cite{JS}. In the first paper compound Poisson point process approximation is applied while in the second one the authors combine a Bonferroni-like inequality and max-$m$-approximability.

In the following $\{Z_\bn\}$ is an array of i.i.d. random variables satisfying 
\begin{equation}\label{reg}
\P(|Z_\mathbf{0}| > x) = x^{-\alpha} L(x),
\end{equation}
for some index $\alpha >0$ and slowly varying function $L$, and
\begin{equation}\label{balance}
\frac{\P(Z_\mathbf{0} > x)}{\P(|Z_\mathbf{0}| > x)}=p\quad\text{as}\quad x\to\infty, \quad\text{for some}\quad  p\in[0,1].
\end{equation}
We define $a_n:=\inf\{y>0:\P(|Z_\mathbf{0}| > y) \leq n^{-d}\}$ and $v_n:=a_n v$ with fixed $v>0$. Then
$$n^d \P(|Z_\mathbf{0}| > v_n) \to v^{-\alpha}\quad\text{as}\quad n\to\infty.$$
Let us consider the {\em moving maximum field} $\{X_\bn\}$ defined as
$$X_\bn=\sup_{\bj\in\GZ ^d}c_\bj Z_{\bn+\bj},$$
where $c_\bj\in\GR$, not all equal to zero, satisfy
\begin{equation}\label{mm_ass}
\sum_{\bj\in\GZ ^d} |c_\bj|^\beta <\infty \quad \text{for some} \quad 0<\beta < \alpha.
\end{equation}
From Cline \cite[Lemma 2.2]{CLINE} it follows that the field $\{X_\bn\}$ is well defined and
\begin{eqnarray}\label{tail_mm}
\lim_{x\to\infty}\frac{\P(X_\mathbf{0}>x)}{\P(|Z_\mathbf{0}|>x)} 
& = &\lim_{x\to\infty}\frac{\P\left(\sup_{\bj\in\GZ ^d}c_\bj  Z_{\bj}>x\right)}{\P(|Z_\mathbf{0}|>x)} \nonumber \\
& =& \lim_{x\to\infty}\frac{\sum_{\bj\in\GZ^d} \P(c_\bj  Z_{\bj}>x)}{\P(|Z_\mathbf{0}|>x)} 
= p  \sum_{c_\bj  > 0}  c_\bj ^\alpha
+ q  \sum_{c_\bj  < 0} |c_\bj |^\alpha.
\end{eqnarray}
with $q:=1-p$.

{Since the moving maximum field is \emph{max-$m$-approximable}, there exists a \emph{phantom distribution function} for $\{X_\bn\}$ (see Jakubowski and Soja-Kukie\l a \cite{JS}) and hence the field is weakly dependent in the sense of~(\ref{WEAK_DEP}).}
We will apply Theorem \ref{THEO_MAIN} with $\preccurlyeq$ being the lexicographic order on $\GZ^d$ to describe the asymptotics of partial maxima. Let us observe that the exponent in~(\ref{CONV_MAIN}) satisfies
\begin{eqnarray*}
\lefteqn{n^d \P\left(X_{\w{0}} > v_n, M_{A(\bp(n))}\leq  v_n\right)}\\ 
& = & \!\!\! n^d\P\left(\bigcup_{\bj\in\GZ^d} \{c_\bj Z_\bj > v_n \}, \bigcap_{\bk\in\GZ^d} \left\{\max_{\bi\in A(\bp(n))} (c_{\bk - \bi} Z_\bk) \leq v_n\right\} \right)\\
& = & \!\!\! n^d \sum_{\bj \in \GZ^d} \P\left( c_\bj Z_\bj > v_n , \bigcap_{\bk\in\GZ^d} \left\{\max_{\bi\in A(\bp(n))} (c_{\bk - \bi} Z_\bk) \leq v_n\right\}\right) + o(1)\\
& = & \!\!\! n^d \sum_{\bj \in \GZ^d} \P\left( c_\bj Z_\bj > v_n \geq \max_{\bi\in A(\bp(n))} (c_{\bj - \bi} Z_\bj), \bigcap_{\bk \neq \bj} \left\{\max_{\bi\in A(\bp(n))} (c_{\bk - \bi} Z_\bk) \leq v_n\right\}\right) +  o(1)\\
& = & \!\!\! n^d \sum_{\bj \in \GZ^d} \P \left( c_\bj Z_\bj \! > \! v_n \! \geq \! \max_{\bi\in A(\bp(n))} \! (c_{\bj - \bi} Z_\bj)\right) \P \left(\bigcap_{\bk \neq \bj} \left\{\max_{\bi\in A(\bp(n))} \! (c_{\bk - \bi} Z_\bk) \! \leq \! v_n\right\}\right) \! + \! o(1),
\end{eqnarray*}
as $n\to\infty$, where the second equality follows from (\ref{tail_mm}) combined with the choice of $\{v_n\}$ and the last one is a consequence of the independence of $Z_\bj$ for $\bj\in\GZ^d$. Note that we have
\begin{multline*}
\P\left(\bigcap_{\bk \neq \bj} \left\{\max_{\bi\in A(\bp(n))} (c_{\bk - \bi} Z_\bk) \leq v_n\right\}\right)
 \geq  \P\left(\bigcap_{\bk \in \GZ^d} \left\{\max_{\bi\in A(\bp(n))} (c_{\bk - \bi} Z_\bk) \leq v_n\right\}\right) \\
 \geq  \P \left(M_{A(\bp(n))}\leq  v_n\right) \geq 1- o(n^d)\P(X_{\mathbf{0}} >v_n) = 1 +  o(1).
\end{multline*}
Moreover, for $p_{\min}(n):=\min \{p_l(n) : 1\leq l\leq d\}$ and for $q(n) \in \GN$ chosen so that $q(n)\to\infty$, $q(n) \leq p_{\min}(n)/2$ and $q(n)^d n^d \P(\max\{ c_{\bi}Z_{\mathbf{0}} : \|\bi\| >p_{\min}(n)/2 \} > v_n) \to 0$, it follows that
\begin{eqnarray*}
\lefteqn{\hspace{-0.75cm} \left| n^d \sum_{\bj \in \GZ^d} \P\left( c_\bj Z_\bj > v_n \geq \max_{\bi\in A(\bp(n))} (c_{\bj - \bi} Z_\bj)\right)
-  n^d \sum_{\bj\in\GZ^d} \P\left( c_\bj Z_\bj > v_n \geq \sup_{\mathbf{0} \prec \bi} (c_{\bj - \bi} Z_\bj)\right) \right| }\\
& \leq & n^d \sum_{\bj \in \GZ^d} \P\left( c_\bj Z_\bj > v_n, \sup_{\mathbf{0} \prec \bi \notin A(\bp(n))} (c_{\bj - \bi} Z_\bj) > v_n \right)\\
& \leq & n^d \sum_{\bj \in \GZ^d} \P\left( c_\bj Z_\bj > v_n, \sup_{\|\bi \| > p_{\min}(n)} (c_{\bj - \bi} Z_\bj) > v_n \right)\\
& \leq & n^d\sum_{\|\bj\| \leq q(n)} \P\left(\sup_{\|\bi \| > p_{\min}(n)} (c_{\bj - \bi} Z_\bj) > v_n \right) + n^d \sum_{\|\bj\| > q(n)} \P\left( c_\bj Z_\bj > v_n\right)\\
& \leq & {n^d}(2 q(n)+1)^d \P\left(\sup_{\|\bi \| > p_{\min}(n)/2} (c_{\bi} Z_{\mathbf{0}}) > v_n \right) + n^d \sum_{\|\bj\| > q(n)} \P\left( c_\bj Z_\bj > v_n\right)
\end{eqnarray*}
The first summand on the right-hand side tends to zero due to the choice of $q(n)$ and the second one tends to zero by properties (\ref{mm_ass}), (\ref{tail_mm}) and the definition of $v_n$. We conclude that
\begin{multline*}
n^d \P\left(X_{\w{0}} > v_n, M_{A(\bp(n))}\leq  v_n\right) \\
= \left(n^d \sum_{\bj \in \GZ^d} \P\left( c_\bj Z_\bj > v_n \geq \sup_{\mathbf{0} \prec \bi} (c_{\bj - \bi} Z_\bj)\right) + o(1)\right) (1+ o(1)) +  o(1).
\end{multline*}
To complete the above calculation, it is sufficient to observe that
\begin{multline*}
n^d \sum_{\bj \in \GZ^d} \P\left( c_\bj Z_\bj > v_n \geq \sup_{\mathbf{0} \prec \bi} (c_{\bj - \bi} Z_\bj)\right)
=  n^d \sum_{\bj \in \GZ^d} \P\left( c_\bj Z_{\mathbf{0}} > v_n \geq \sup_{\bi \prec \bj} (c_{\bi} Z_{\mathbf{0}})\right)\\
=  n^d \P\left(\sup_{\bj \in \GZ^d} (c_\bj Z_{\mathbf{0}}) > v_n\right) \to (p(c^+)^{\alpha} + q(c^-)^{\alpha})v^{-\alpha},
\end{multline*}
with $c^+:=\max_{\bi \in\GZ ^d}\max\{c_\bi,0\}$ and $c^-:=\max_{\bi \in\GZ ^d} \max\{-c_\bi,0\}$.
By (\ref{CONV_MAIN}) we obtain
$$ \P\left(M_{\bN(n)} \leq  v_n\right) \to \exp(-(p(c^+)^{\alpha} + q(c^-)^{\alpha})v^{-\alpha}), \quad \text{as} \quad n\to\infty. $$
Applying formula (\ref{F1}) and property (\ref{tail_mm}), we calculate the extremal index of $\{X_\bn\}$ as follows
\begin{equation*}
\theta=\frac{p(c^+)^\alpha + q(c^-)^\alpha}{ p\sum_{c_\bj > 0}c_\bj ^\alpha
+ q \sum_{c_\bj  < 0}|c_\bj |^\alpha},
\end{equation*}
whenever the denominator is positive, which is the only interesting case.

\acks
The author would like to thank the referees and the associate editor for comments and suggestions which improved the manuscript significantly.

\end{document}